\title{\normalsize \textsc{On restricted divisor sums associated to the Chowla-Walum conjecture}}
\author{\normalsize Olivier Bordell\`{e}s}
\date{}
\newcommand{\Z}{\mathbb {Z}}
\newcommand{\R}{\mathbb {R}}
\DeclareMathOperator{\mx}{max}
\DeclareMathOperator{\cw}{CW-}
\theoremstyle{theorem}
\newtheorem{prop}{Proposition}[subsection]
\newtheorem{theorem}[prop]{Theorem}
\newtheorem{coro}[prop]{Corollary}
\newtheorem{lemma}[prop]{Lemma}
\newtheorem{conj}[prop]{Conjecture}
\theoremstyle{definition}
\theoremstyle{remark}
\newtheorem{rem}[prop]{Remark}
\begin{document}

\maketitle

\thispagestyle{myheadings}
\font\rms=cmr8 
\font\its=cmti8 
\font\bfs=cmbx8

\begin{abstract}
We first study the mean value of certain restricted divisor sums involving the Chowla-Walum sums, improving in particular a recent estimate given by Iannucci. The aim of the second part of this work is the generalization of the previous study, by restricting the range of the divisors in the studied divisor sums, extending the Chowla-Walum conjecture, proving a small part of this extended conjecture and generalizing the asymptotic formulas previously obtained in the first part.
\end{abstract}

\subsection{A first restricted divisor sum}

\subsubsection{Introduction}

\noindent
Let $\alpha \geqslant 0$ be a fixed real number. For any $n \in \Z_{\geqslant 1}$, define
$$\widetilde{\sigma}_\alpha (n) = \sum_{\substack{d \mid n \\ d \leqslant \sqrt{n}}} d^\alpha$$
and set $\widetilde{\tau} := \widetilde{\sigma}_0$. In \cite[Theorem~2]{ian}, the author studies the case $\alpha =1$ and proves that
\begin{align}
   \sum_{n \leqslant x} \widetilde{\sigma}_1 (n) = \tfrac{2}{3} x^{3/2} + O (x \log x). \label{eq:iann}
\end{align}
By using a recent result of Bourgain \& Watt \cite{bouw}, we are able to generalize and improve this estimate. Our result below involves the Chowla-Walum conjecture, abbreviated here as 'the CW-conjecture`, that we recall in the next section.

\subsubsection{The Chowla-Walum conjecture}

\noindent
In what follows, let $\lfloor x \rfloor$ and $\{ x \} = x - \lfloor x \rfloor$ be the integral and fractional parts of $x \in \R$, and, for any $j \in \Z_{\geqslant 1}$, let $x \mapsto B_j \left( \{ x\} \right)$ be the $j$-th Bernoulli function, where $x \mapsto B_j(x)$ is the $j$-th Bernoulli polynomial defined inductively by setting $B_0(x) = 1$ and, for all $j \in \Z_{\geqslant 1}$
$$\frac{\textrm{d}}{\textrm{d}x} B_j(x) = j B_{j-1} (x) \quad \left( x \in \R \right) \quad \textrm{and} \quad \int_0^1 B_j(x) \, \textrm{d}x = 0.$$
It is customay to set $B_1 \left( \{x\} \right) := \psi(x) = x - \lfloor x \rfloor - \frac{1}{2}$ the first Bernoulli function. Now, for any $\alpha \geqslant 0$ and $j \in \Z_{\geqslant 1}$, define the \textit{Chowla-Walum sum}
\begin{equation}
   G_{\alpha,j}(x) := \sum_{d \leqslant \sqrt{x}} d^\alpha B_j \left( \left\lbrace \frac{x}{d} \right\rbrace \right). \label{eq:G}
\end{equation}
We extend this notation to the case $j=0$ for which we allow $\alpha$ to be any real number in this case.

\medskip

\noindent
The Dirichlet divisor conjecture states that, for $x$ large and all $\varepsilon > 0$, $G_{0,1}(x) \ll_\varepsilon x^{1/4+ \varepsilon}$. As an extension of this problem, Chowla and Walum \cite{chow} stated the following conjecture.

\begin{conj}[Chowla-Walum]
\label{conj:cw}
For $j \in \Z_{\geqslant 1}$, $\alpha \geqslant 0$, $x$ large and all $\varepsilon > 0$
$$G_{\alpha,j} (x) \ll_\varepsilon x^{\frac{1}{2} \alpha + \frac{1}{4} +\varepsilon}.$$
\end{conj}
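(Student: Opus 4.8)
The final statement is the Chowla--Walum conjecture itself. As already recalled in the excerpt, its case $j=1$, $\alpha = 0$ reads $G_{0,1}(x) \ll_\varepsilon x^{1/4+\varepsilon}$ and is \emph{equivalent} to the Dirichlet divisor conjecture; so a complete proof is out of the question, and what I can offer is the strategy one is inevitably led to, together with the precise point at which it stops. The natural route is Fourier-analytic: expand each Bernoulli function into its exponential series and reduce $G_{\alpha,j}(x)$ to a family of one-dimensional exponential sums, on which one then tries to exhibit square-root cancellation.

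Concretely, for $j \in \Z_{\geqslant 1}$ and $t \notin \Z$ one has the Fourier expansion
$$B_j\big(\{t\}\big) = -\frac{j!}{(2\pi i)^j} \sum_{\substack{h \in \Z \\ h \neq 0}} \frac{e(ht)}{h^j}, \qquad e(u) := e^{2\pi i u}.$$
The plan is to substitute $t = x/d$ into the definition \eqref{eq:G}, to truncate the series at a height $H = x^{A}$ with $A$ large, and to interchange the two summations, so that $G_{\alpha,j}(x)$ becomes a linear combination (weighted by $h^{-j}$) of the exponential sums
$$S_\alpha(x;h) := \sum_{d \leqslant \sqrt{x}} d^\alpha\, e\!\left(\frac{hx}{d}\right) \qquad (h \geqslant 1),$$
plus a tail contribution and the contribution of the integers $d$ with $x/d \in \Z$. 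For $j \geqslant 2$ the factor $h^{-j}$ makes both the truncation error and this interchange routine; for $j=1$ I would substitute instead a smoothly truncated expansion of $\psi$, picking up the standard $O$-term.

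It then remains to bound $S_\alpha(x;h)$. Splitting $d$ into dyadic ranges $d \asymp D \leqslant \sqrt{x}$ and removing the smooth factor $d^\alpha$ by partial summation, the phase $f(d) = hx/d$ satisfies $\lvert f^{(k)}(d) \rvert \asymp hx\,D^{-k-1}$, so an exponent pair $(\kappa,\lambda)$ delivers $\sum_{d \asymp D} e(hx/d) \ll (hx/D^2)^\kappa D^\lambda$. Since $D^\alpha \leqslant x^{\alpha/2}$ on the whole range, summing these estimates over the dyadic scales $D$ and over $h \leqslant H$ against $h^{-j}$ would give $G_{\alpha,j}(x) \ll_\varepsilon x^{\alpha/2 + \theta + \varepsilon}$ with an explicit $\theta = \theta(\kappa,\lambda)$. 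The conjectured exponent $\theta = \tfrac14$ is recovered \emph{exactly} by the optimal exponent pair $(\kappa,\lambda) = (\varepsilon, \tfrac12 + \varepsilon)$, i.e.\ by square-root cancellation in $S_\alpha(x;h)$.

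This last input is the entire difficulty, and the obstacle is fundamental rather than technical: producing the exponent $\theta = \tfrac14$ amounts to the optimal (conjectural) exponent pair, which is unknown even in the flagship case $j=1$, $\alpha=0$ of the divisor problem. Every presently available tool --- the van der Corput iterates, Huxley's Bombieri--Iwaniec method, and the decoupling bounds of Bourgain \& Watt \cite{bouw} --- yields only an admissible $\theta$ strictly larger than $\tfrac14$, hence an unconditional estimate $G_{\alpha,j}(x) \ll_\varepsilon x^{\alpha/2 + \theta + \varepsilon}$ that falls short of the conjectured $\tfrac14$. Closing this gap is precisely what the Chowla--Walum conjecture demands and lies beyond current methods; accordingly I would expect, at most, to establish the conjecture in special cases or in an averaged (mean-square) form, consistent with the paper's stated goal of proving ``a small part of this extended conjecture''.
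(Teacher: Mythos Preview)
Your assessment is correct: the statement in question is a \emph{conjecture}, and the paper does not prove it. It is stated as Conjecture~\ref{conj:cw} and used only conditionally (in the definition of $\theta_\alpha$ in Theorem~\ref{th:main}); the paper explicitly notes that merely the special cases $(\alpha,j)=(1,2)$ and $\alpha\geqslant\tfrac12$, $j\geqslant2$ are known. There is therefore no ``paper's own proof'' to compare your proposal against.

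Your discussion of why a proof is out of reach is accurate and well aimed. The Fourier expansion of $B_j(\{t\})$ followed by exponent-pair estimates is exactly the machinery the paper itself deploys for its partial result on the generalized sums (Theorem~\ref{th:CW2}); and your identification of the obstruction---that reaching the exponent $\tfrac14$ would require the conjectural optimal exponent pair, already equivalent to the divisor conjecture in the case $j=1$, $\alpha=0$---is precisely the point. Nothing is missing from your analysis; the problem is simply open.
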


\noindent
Chowla and Walum proved the conjecture for $(\alpha,j)=(1,2)$. Later, Kanemitsu and Sita Rama Chandra Rao \cite{kan85} proved this conjecture in the case $\alpha \geqslant \frac{1}{2}$ and $j \geqslant 2$. They also study the mean square of $G_{\alpha,j} (x)$ and proved the following bound which supports the Chowla-Walum Conjecture: if $|\alpha| \leqslant \frac{1}{2}$, then
$$\int_1^T G_{\alpha,j} (x)^2 \, \textrm{d}x \ll T^{\alpha+3/2+\varepsilon}.$$
For more results concerning the Chowla-Walum conjecture, see \cite{cao,kan78,mey85}.

\subsubsection{The main result}

\begin{theorem}
\label{th:main}
Let $\alpha \geqslant 0$ be a fixed real number and set
$$\theta_\alpha := \tfrac{1}{2} \alpha + \begin{cases} \frac{1}{4}, & \text{if the} \ \cw \text{conjecture is true} \; ; \\ & \\ \frac{517}{\np{1648}}, & \text{otherwise}. \end{cases}$$
For $x$ sufficiently large and all $\varepsilon \in \left( 0,\frac{1}{2} \right]$
$$\sum_{n \leqslant x} \widetilde{\tau} (n) = \tfrac{1}{2} x \log x + x \left( \gamma - \tfrac{1}{2} \right) + \tfrac{1}{2} x^{1/2} + O \left( x^{\theta_0 + \varepsilon} \right)$$
and, if $\alpha > 0$
$$\sum_{n \leqslant x} \widetilde{\sigma}_\alpha (n) = \tfrac{2}{\alpha(\alpha+2)} x^{1+\alpha/2} + \tfrac{1}{2(\alpha+1)} x^{(\alpha+1)/2} + x \left( \tfrac{5}{8} - \tfrac{\alpha}{8}  - \tfrac{1}{\alpha} \right)  + O \left( x^{\theta_\alpha + \varepsilon} \right).$$
\end{theorem}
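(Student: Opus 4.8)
The plan is to convert the restricted divisor sum into a count over a hyperbolic region and then peel off the main terms via Euler–Maclaurin, leaving an error that is governed precisely by the Chowla–Walum sums $G_{\alpha,j}(x)$. First I would write, for each $n\le x$,
$$\widetilde{\sigma}_\alpha(n) = \sum_{\substack{d\mid n\\ d\le\sqrt n}} d^\alpha = \sum_{\substack{de=n\\ d\le e}} d^\alpha,$$
so that summing over $n\le x$ gives
$$\sum_{n\le x}\widetilde{\sigma}_\alpha(n) = \sum_{\substack{de\le x\\ d\le e}} d^\alpha = \sum_{d\le\sqrt x} d^\alpha \sum_{d\le e\le x/d} 1 = \sum_{d\le\sqrt x} d^\alpha\left(\left\lfloor\frac xd\right\rfloor - d + 1\right).$$
The term $\sum_{d\le\sqrt x}d^\alpha(1-d)$ is elementary (Euler–Maclaurin on $d^\alpha$ and $d^{\alpha+1}$), contributing $-\tfrac{1}{\alpha+2}x^{(\alpha+2)/2}+\tfrac{1}{2}x^{(\alpha+1)/2}+\cdots$ up to an error $O(x^{(\alpha+1)/2})$ (or $O(\log x)$ when $\alpha=0$), which is absorbed into $O(x^{\theta_\alpha+\varepsilon})$. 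The heart of the matter is $\sum_{d\le\sqrt x}d^\alpha\lfloor x/d\rfloor$, which I would write as
$$\sum_{d\le\sqrt x}d^\alpha\frac xd - \tfrac12\sum_{d\le\sqrt x}d^\alpha - \sum_{d\le\sqrt x}d^\alpha\,B_1\!\left(\left\{\tfrac xd\right\}\right) = x\sum_{d\le\sqrt x}d^{\alpha-1} - \tfrac12\sum_{d\le\sqrt x}d^\alpha - G_{\alpha,1}(x).$$

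Next I would evaluate the two smooth sums by Euler–Maclaurin. For $\alpha>0$, $\sum_{d\le\sqrt x}d^{\alpha-1} = \tfrac{1}{\alpha}x^{\alpha/2} + \tfrac12 x^{(\alpha-1)/2} + \zeta(1-\alpha) + O(x^{(\alpha-2)/2})$-type expansions (with the constant reading off from the Hurwitz/zeta values), and $\sum_{d\le\sqrt x}d^\alpha = \tfrac{1}{\alpha+1}x^{(\alpha+1)/2}+\tfrac12 x^{\alpha/2}+\zeta(-\alpha)+\cdots$; multiplying the first by $x$ produces the leading term $\tfrac{1}{\alpha}x^{1+\alpha/2}$, and combining with the $-\tfrac{1}{\alpha+2}x^{(\alpha+2)/2}$ from the elementary piece gives $\bigl(\tfrac1\alpha-\tfrac1{\alpha+2}\bigr)x^{1+\alpha/2} = \tfrac{2}{\alpha(\alpha+2)}x^{1+\alpha/2}$, exactly the stated main term. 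Tracking the $x^{(\alpha+1)/2}$ terms gives the coefficient $\tfrac12 - \tfrac12 + \tfrac{1}{2(\alpha+1)} = \tfrac{1}{2(\alpha+1)}$, and collecting all contributions of order $x$ (the $x\cdot\zeta(1-\alpha)$ term, the constants $\zeta(-\alpha)$, etc., evaluated so that they combine to $x$-order only in the appropriate way) yields the coefficient $\tfrac58-\tfrac\alpha8-\tfrac1\alpha$. The $\alpha=0$ case is handled identically, except $\sum_{d\le\sqrt x}d^{-1}=\tfrac12\log x+\gamma+O(x^{-1/2})$ produces the $\tfrac12 x\log x$ and $\gamma x$ terms, and the linear coefficient collapses to $-\tfrac12$.

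Finally, the error term is $-G_{\alpha,1}(x)$ plus the lower-order Euler–Maclaurin tails. If the Chowla–Walum conjecture holds then $G_{\alpha,1}(x)\ll_\varepsilon x^{\alpha/2+1/4+\varepsilon} = x^{\theta_\alpha+\varepsilon}$ directly. Otherwise I would invoke the Bourgain–Watt estimate \cite{bouw} for the Dirichlet divisor error term, which yields (after a partial-summation / dyadic argument converting $G_{\alpha,1}$ to the $\alpha=0$ case, since $d^\alpha$ is monotone on $[1,\sqrt x]$) the bound $G_{\alpha,1}(x)\ll_\varepsilon x^{\alpha/2+517/1648+\varepsilon}$, matching the exponent $\theta_\alpha$ in the unconditional case. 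I expect the main obstacle to be the bookkeeping of the constant term of order $x$: one must carry several Euler–Maclaurin expansions to enough terms and correctly evaluate the resulting $\zeta$- and Hurwitz-$\zeta$-values (and their limits as $\alpha\to$ integer points) to see that everything collapses to the clean coefficient $\tfrac58-\tfrac\alpha8-\tfrac1\alpha$; the passage from $G_{\alpha,1}$ to $G_{0,1}$ via partial summation, and the verification that the resulting error is still $\ll x^{\theta_\alpha+\varepsilon}$, is the other place where care is needed.
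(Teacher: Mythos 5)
Your proposal follows the paper's route step for step: interchange the order of summation to reach $\sum_{d\leqslant\sqrt{x}}d^{\alpha}\left(\left\lfloor x/d\right\rfloor-d+1\right)$, write $\left\lfloor x/d\right\rfloor=x/d-\tfrac12-\psi(x/d)$ so that the sum becomes $xG_{\alpha-1,0}(x)-G_{\alpha+1,0}(x)+\tfrac12 G_{\alpha,0}(x)-G_{\alpha,1}(x)$, evaluate the smooth pieces by Euler--Maclaurin, and bound $G_{\alpha,1}(x)$ either by Conjecture~\ref{conj:cw} or by the Bourgain--Watt estimate via dyadic splitting and partial summation, which is exactly Corollary~\ref{cor:bouw}.

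There is, however, one step that is wrong as written and needs repair. You expand $\sum_{d\leqslant\sqrt{x}}d^{\beta}$ with secondary term $+\tfrac12 x^{\beta/2}$ and constant $\zeta(-\beta)$; that is the expansion at an \emph{integer} endpoint, whereas the endpoint here is $\sqrt{x}$, and the correct boundary term is $-\psi\!\left(\sqrt{x}\right)x^{\beta/2}$, an oscillating quantity of full size $x^{\beta/2}$. Consequently your claim that the elementary piece is evaluated ``up to an error $O\!\left(x^{(\alpha+1)/2}\right)$ which is absorbed into $O\!\left(x^{\theta_{\alpha}+\varepsilon}\right)$'' cannot stand: since $\theta_{\alpha}\leqslant\tfrac12\alpha+\tfrac{517}{1648}<\tfrac12\alpha+\tfrac12$, an uncontrolled error of size $x^{(\alpha+1)/2}$ would exceed the claimed error term and wipe out the secondary term $\tfrac{1}{2(\alpha+1)}x^{(\alpha+1)/2}$ (and, for $\alpha=0$, the term $\tfrac12 x^{1/2}$) altogether. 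What saves the argument --- and the reason the paper's Lemma~\ref{le:Euler_G} keeps the term $-\psi\!\left(x^{1/a}\right)x^{\beta/a}$ explicitly rather than discarding it --- is the exact cancellation of $-\psi\!\left(\sqrt{x}\right)x^{(\alpha+1)/2}$ coming from $xG_{\alpha-1,0}(x)$ against $+\psi\!\left(\sqrt{x}\right)x^{(\alpha+1)/2}$ coming from $-G_{\alpha+1,0}(x)$; your ``$\tfrac12-\tfrac12$'' bookkeeping is the shadow of this cancellation, but it must be carried out with the $\psi$-terms themselves, after which the only surviving boundary term is $-\tfrac12\psi\!\left(\sqrt{x}\right)x^{\alpha/2}=O\!\left(x^{\alpha/2}\right)$, which is harmless. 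Finally, the zeta- and Hurwitz-zeta gymnastics you anticipate for the coefficient of $x$ are not needed: the only contribution of order $x$ is $x$ times the constant in the expansion of $G_{\alpha-1,0}(x)$, which the paper reads off from Lemma~\ref{le:Euler_G} with $\beta=\alpha-1$ as $\tfrac12-\tfrac{\alpha-1}{8}-\tfrac{1}{\alpha}=\tfrac58-\tfrac{\alpha}{8}-\tfrac{1}{\alpha}$ (and, for $\alpha=0$, the logarithmic case of that lemma produces $\tfrac12 x\log x+\gamma x$ directly), the constants from the other two smooth sums being $O(1)$.
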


\noindent
Note that in the case $\alpha > 0$, the third term is absorbed by the error term as soon as $\alpha \geqslant \frac{3}{2}$ if the $\cw$conjecture is true, and $\alpha \geqslant \frac{\np{1131}}{824} \approx \np{1.3725} \dotsc$ otherwise. When $\alpha =1$, we derive the following estimate improving \eqref{eq:iann}.

\begin{coro}
For $x$ sufficiently large and all $\varepsilon \in \left( 0,\frac{1}{2} \right]$
$$\sum_{n \leqslant x} \widetilde{\sigma}_1 (n) = \tfrac{2}{3} x^{3/2} - \tfrac{1}{4} x + O \left( x^{\theta_1 + \varepsilon} \right)$$
where
$$\theta_1 := \begin{cases} \frac{3}{4}, & \text{if the} \ \cw \text{conjecture is true} \; ; \\ & \\ \frac{\np{1341}}{\np{1648}} \approx \np{0.8137} \dotsc, & \text{otherwise}. \end{cases}$$
\end{coro}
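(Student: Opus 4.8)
The plan is to obtain the Corollary as a direct specialization of Theorem~\ref{th:main} to the case $\alpha = 1$; the only genuine content is the simplification of the three explicit constants and the verification that none of the surviving main terms is absorbed by the error term.

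First I would set $\alpha = 1$ in the asymptotic formula for $\sum_{n \leqslant x} \widetilde{\sigma}_\alpha(n)$ supplied by Theorem~\ref{th:main}. The leading term $\tfrac{2}{\alpha(\alpha+2)} x^{1+\alpha/2}$ becomes $\tfrac{2}{1 \cdot 3} x^{3/2} = \tfrac{2}{3} x^{3/2}$. The second term $\tfrac{1}{2(\alpha+1)} x^{(\alpha+1)/2}$ becomes $\tfrac{1}{4} x$, while the third term $x\bigl(\tfrac{5}{8} - \tfrac{\alpha}{8} - \tfrac{1}{\alpha}\bigr)$ becomes $x\bigl(\tfrac{5}{8} - \tfrac{1}{8} - 1\bigr) = -\tfrac{1}{2} x$. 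Since both of these have exponent exactly $1$, they combine into the single linear term $\tfrac{1}{4}x - \tfrac{1}{2}x = -\tfrac{1}{4}x$, which yields the claimed formula $\tfrac{2}{3}x^{3/2} - \tfrac{1}{4}x + O\bigl(x^{\theta_1 + \varepsilon}\bigr)$.

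For the exponent, one simply evaluates $\theta_\alpha = \tfrac12 \alpha + \tfrac14$ (if the $\cw$conjecture is true) or $\theta_\alpha = \tfrac12 \alpha + \tfrac{517}{\np{1648}}$ (otherwise) at $\alpha = 1$: this gives $\theta_1 = \tfrac{3}{4}$ in the first case, and $\theta_1 = \tfrac12 + \tfrac{517}{\np{1648}} = \tfrac{824 + 517}{\np{1648}} = \tfrac{\np{1341}}{\np{1648}} \approx \np{0.8137}$ in the second, matching the statement; the admissible range $\varepsilon \in \left(0, \tfrac12\right]$ is inherited verbatim from Theorem~\ref{th:main}.

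There is no real obstacle here; the only point requiring a moment's care is that, because $\alpha = 1$ lies below the thresholds $\tfrac32$ (if the $\cw$conjecture holds) and $\tfrac{\np{1131}}{824} \approx \np{1.3725}$ (otherwise) recorded after Theorem~\ref{th:main}, the linear term is \emph{not} swallowed by $O\bigl(x^{\theta_1 + \varepsilon}\bigr)$ --- indeed $\theta_1 < 1$ in both cases --- so the term $-\tfrac14 x$ genuinely survives in the final estimate, which is precisely the refinement of~\eqref{eq:iann} being asserted.
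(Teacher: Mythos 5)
Your proposal is correct and is exactly the paper's (implicit) argument: the Corollary is obtained by substituting $\alpha = 1$ into Theorem~\ref{th:main}, combining the two linear terms $\tfrac14 x - \tfrac12 x = -\tfrac14 x$, and evaluating $\theta_1 = \tfrac34$ or $\tfrac{\np{1341}}{\np{1648}}$. Your additional check that $\theta_1 < 1$, so the linear term is not absorbed, is a sensible verification consistent with the paper's remark following Theorem~\ref{th:main}.
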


\subsubsection{Technical tools}

\noindent
Studying the Dirichlet divisor problem, the authors \cite{bouw} prove the following important result.

\begin{lemma}
\label{le:bouw}
Let $a,b \in \Z$ such that $|a|+|b| \leqslant 1$, $N \in \Z_{\geqslant 3}$ and $x \in \R_{\geqslant 1}$ such that $3 \leqslant N \leqslant x^{1/2}$. Let $\varepsilon \in \left( 0,\frac{1}{2} \right]$. Then
$$\sum_{N < n \leqslant 2N} \psi \left( \frac{4x}{4n+a} + \frac{b}{4} \right) \ll_\varepsilon x^{\frac{517}{\np{1648}} + \varepsilon}.$$
\end{lemma}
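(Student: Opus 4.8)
The plan is to linearise the sawtooth $\psi$ by a truncated Fourier expansion, reduce the sum to Dirichlet-divisor-type exponential sums, and then import the core estimate of Bourgain and Watt, the shifts $a,b$ being absorbed harmlessly along the way. Throughout write $e(t) = e^{2\pi i t}$. First I would dispose of the shifts: since $n > N \geqslant 3$ and $|a| \leqslant 1$, the denominator $4n+a$ stays positive and $\asymp N$, so $u \mapsto 4x/(4u+a)+b/4$ is a smooth function on $[N,2N]$ whose derivatives behave exactly like those of $x/u$, while the additive constant $b/4$ can only ever enter as a unimodular phase once $\psi$ is expanded and hence plays no role in the size of the sum.

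Next I would apply Vaaler's approximation: for a parameter $H$ to be optimised,
$$\psi(t) = \sum_{1 \leqslant |h| \leqslant H} c_h \, e(ht) + r_H(t), \qquad |c_h| \leqslant \frac{1}{2\pi |h|},$$
where $r_H$ is dominated by a non-negative trigonometric polynomial whose coefficients are $\ll 1/H$. Summing the main part over $N < n \leqslant 2N$ gives
$$\sum_{N < n \leqslant 2N} \psi \!\left( \frac{4x}{4n+a} + \frac{b}{4} \right) = \sum_{1 \leqslant |h| \leqslant H} c_h \, e\!\left( \tfrac{hb}{4} \right) S_h + E_H, \qquad S_h := \sum_{N < n \leqslant 2N} e\!\left( \frac{4hx}{4n+a} \right).$$
The majorant controlling $r_H$ contributes $\ll N/H$ from its constant term, together with exponential sums of the very same shape as $S_h$ but weighted by $\ll 1/H$, so the error $E_H$ is governed by the same estimates as the main term.

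The heart of the matter is a uniform bound for the inner sums $S_h$. Setting $f(u) = 4hx/(4u+a)$ one has $f'(u) \asymp hx/N^2$ with regularly decaying higher derivatives, so $S_h$ is precisely a divisor-problem exponential sum with main parameter $hx$ taken over a dyadic block $n \asymp N \leqslant x^{1/2}$. This is exactly the object treated in \cite{bouw}: their refinement of the Bombieri--Iwaniec method, in which the classical second-spacing (large-values) step is superseded by Bourgain's decoupling and large-value input, yields for each dyadic block a bound of the quality $x^{517/1648 + \varepsilon}$, uniformly over $3 \leqslant N \leqslant x^{1/2}$. I would quote this dyadic estimate, insert it into the displayed identity, and sum against the weights $1/h$ over $1 \leqslant h \leqslant H$.

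Finally, balancing $H$ against the Vaaler error $N/H$ and using $N \leqslant x^{1/2}$ to absorb all powers of $N$ into $x^\varepsilon$, every contribution collapses to $\ll_\varepsilon x^{517/1648 + \varepsilon}$. The genuinely hard step is the third one: the exponential-sum estimate with exponent $\frac{517}{1648}$ is the full content of Bourgain--Watt and rests on deep harmonic analysis. The remaining work, namely the shift reduction and the Vaaler linearisation, is routine, so the role of this lemma is essentially to repackage that theorem in the shifted form $4x/(4n+a) + b/4$ that the later arguments require.
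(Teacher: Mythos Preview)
The paper does not supply its own proof of this lemma: the sentence immediately preceding it reads ``Studying the Dirichlet divisor problem, the authors \cite{bouw} prove the following important result'', and the statement is then quoted as a black box from Bourgain and Watt. In particular the shifted form $\psi\bigl(4x/(4n+a)+b/4\bigr)$ with $|a|+|b|\leqslant 1$ is not something that has to be reduced away; it is precisely the shape in which Bourgain and Watt formulate their estimate, the shifts being present because they treat the Gauss circle problem and the Dirichlet divisor problem in a single framework. Your Vaaler reduction is therefore superfluous --- the correct move is simply to cite \cite{bouw}, exactly as the paper does.

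Your sketch is also not quite sound as written. Once you expand $\psi$ and pass to the sums $S_h=\sum_n e\bigl(4hx/(4n+a)\bigr)$, the natural large parameter becomes $hx$ rather than $x$, so any imported estimate of divisor-problem type reads $(hx)^{517/1648+\varepsilon}$. Summing against the Vaaler weights then gives
$\sum_{h\leqslant H} h^{-1}(hx)^{517/1648+\varepsilon}\asymp x^{517/1648+\varepsilon}H^{517/1648+\varepsilon}$,
and balancing this against $N/H$ with $N$ as large as $x^{1/2}$ yields an exponent strictly larger than $517/1648$. The point is that $517/1648$ is the output of Bourgain--Watt's analysis of the \emph{full} $\psi$-sum (equivalently, of the double sum over $h$ and $n$ after truncation), not a uniform bound on each individual exponential sum $S_h$ that one can afterwards sum in $h$. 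Quoting a ``core estimate'' for $S_h$ and then summing over $h$ is therefore circular: that summation is already baked into the exponent you are trying to quote.
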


\noindent
As a corollary, the following bound can be proved by partial summation.

\begin{coro}
\label{cor:bouw}
For $x$ sufficiently large and all $\varepsilon \in \left( 0,\frac{1}{2} \right]$
$$G_{\alpha,1}(x) \ll_\varepsilon x^{\frac{1}{2} \alpha + \frac{\np{517}}{\np{1648}} + \varepsilon}.$$
\end{coro}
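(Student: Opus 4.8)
The plan is to combine Lemma~\ref{le:bouw} with a dyadic decomposition and one partial summation. Recall that $G_{\alpha,1}(x)=\sum_{d\le\sqrt x}d^\alpha B_1\bigl(\{x/d\}\bigr)=\sum_{d\le\sqrt x}d^\alpha\psi(x/d)$. The first step is to deduce from Lemma~\ref{le:bouw} (used only in the case $a=b=0$) the uniform bound
$$\Delta(x,y):=\sum_{d\le y}\psi(x/d)\ll_\varepsilon x^{\frac{517}{1648}+\varepsilon}\qquad\bigl(1\le y\le x^{1/2}\bigr);$$
the second step is then to strip off the weight $d^\alpha$.

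For the first step I would split off the terms $d\le 4$, which contribute $O(1)$ since $|\psi|\le\tfrac12$, and decompose the remaining range $4<d\le y$ into $O(\log x)$ full dyadic blocks $(N,2N]$ together with one truncated block $(N_0,y]$ at the top, every block having left endpoint $N$ with $3\le N\le x^{1/2}$. Lemma~\ref{le:bouw} with $a=b=0$ bounds each full block by $x^{517/1648+\varepsilon}$, and the truncated block is treated in the same way once one notes that the bound of Lemma~\ref{le:bouw} holds just as well for any truncation $(N,M]$ with $N<M\le 2N$ (the upper endpoint plays no essential role in the method of \cite{bouw}), up to a harmless $O(1)$ from the boundary integers. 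Summing these $O(\log x)$ contributions and absorbing the logarithm into the exponent (by applying the lemma with $\varepsilon/2$ in place of $\varepsilon$, say) gives the displayed bound for $\Delta(x,y)$, uniformly in $1\le y\le x^{1/2}$. I expect this reduction to the precise shape handled by Lemma~\ref{le:bouw}, in particular the treatment of the truncated top block, to be the only genuine obstacle, though a routine one.

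For the second step, apply Abel summation to $\sum_{d\le\sqrt x}d^\alpha\psi(x/d)$ with summatory function $\Delta(x,t)$:
$$G_{\alpha,1}(x)=x^{\alpha/2}\,\Delta(x,\sqrt x)-\alpha\int_1^{\sqrt x}t^{\alpha-1}\,\Delta(x,t)\,\mathrm{d}t.$$
Inserting the bound of the first step and using $\int_1^{\sqrt x}t^{\alpha-1}\,\mathrm{d}t\le\alpha^{-1}x^{\alpha/2}$ when $\alpha>0$ gives
$$G_{\alpha,1}(x)\ll_\varepsilon x^{\alpha/2}\cdot x^{\frac{517}{1648}+\varepsilon}+x^{\frac{517}{1648}+\varepsilon}\cdot x^{\alpha/2}\ll_\varepsilon x^{\frac12\alpha+\frac{517}{1648}+\varepsilon},$$
which is the asserted estimate. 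When $\alpha=0$ no partial summation is needed, since then $G_{0,1}(x)=\Delta(x,\sqrt x)$ and the first step already yields the claim. Thus everything past the uniform bound on $\Delta(x,y)$ is mere bookkeeping, the substance being entirely inherited from \cite{bouw}.
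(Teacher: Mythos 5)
Your argument is correct and is essentially the paper's own proof: one Abel summation to strip off the weight $d^\alpha$, a dyadic decomposition of the resulting partial sums of $\psi(x/d)$, and Lemma~\ref{le:bouw} with $a=b=0$; the paper merely performs the two steps in the opposite order. The only soft spot is your truncated top block $(N_0,y]$, where you invoke an unstated extension of Lemma~\ref{le:bouw} to ranges $(N,M]$ with $N<M\leqslant 2N$; this appeal to the internals of \cite{bouw} is unnecessary, since decomposing downward from $y$ into the blocks $\left( y/2^{j+1},\, y/2^{j} \right]$ makes every block exactly of the form $(N,2N]$ (the finitely many terms with $d\leqslant 3$ contributing $O(1)$), which is precisely what the paper's maximum over $3\leqslant D\leqslant K$ of the blocks $(D,2D]$ encodes.
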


\begin{proof}
By partial summation, we immediately derive using Lemma~\ref{le:bouw}
\begin{align*}
   G_{\alpha,1}(x) &\ll x^{\alpha/2} \left\lbrace \underset{K \leqslant \sqrt{x}}{\mx} \left| \sum_{d \leqslant K} \psi \left( \frac{x}{d} \right) \right|  + 1  \right\rbrace \\
   &\ll x^{\alpha/2} \left\lbrace \underset{K \leqslant \sqrt{x}}{\mx} \ \underset{3 \leqslant D \leqslant K}{\mx} \left| \sum_{D < d \leqslant 2D} \psi \left( \frac{x}{d} \right) \right| \log x  + 1 \right\rbrace  \ll_{\varepsilon} x^{\frac{1}{2} \alpha +\frac{\np{517}}{\np{1648}} + \varepsilon}
\end{align*}
as required.
\end{proof}

\noindent
The case $k=0$ in the sum \eqref{eq:G} can effectively be handled by the Euler-Maclaurin summation formula (see \cite[Theorem~B.5]{monv} for instance). We leave the details of the proof to the reader.

\begin{lemma}
\label{le:Euler_G}
Let $a \geqslant 1$. Then
$$\sum_{d \leqslant x^{1/a}} \frac{1}{d} = \tfrac{1}{a} \log x + \gamma - \psi \left( x^{1/a} \right) x^{-1/a} + O \left( x^{-2/a} \right)$$
and, if $\beta > -1$
$$\sum_{d \leqslant x^{1/a}} d^\beta = \tfrac{1}{\beta + 1} x^{(\beta+1)/a} - \psi \left( x^{1/a} \right) x^{\beta / a} + \tfrac{1}{2} - \tfrac{\beta}{8} - \tfrac{1}{\beta + 1} + O_\beta \left( x^{(\beta - 1)/a} \right).$$
\end{lemma}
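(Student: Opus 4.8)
\noindent
The plan is to derive both identities from the Euler--Maclaurin summation formula of order two, applied on the interval $[1,y]$ with the abbreviation $y:=x^{1/a}\geqslant 1$, and with $f(t)=1/t$ in the first case, $f(t)=t^{\beta}$ in the second. Concretely, I would start from the first-order shape
$$\sum_{1\leqslant n\leqslant y}f(n)=\int_{1}^{y}f(t)\,\mathrm{d}t+\tfrac12 f(1)-\psi(y)f(y)+\int_{1}^{y}\psi(t)f'(t)\,\mathrm{d}t\qquad(f\in C^{1}[1,y]),$$
and then treat the last integral by one integration by parts against $\tfrac12 B_{2}(\{t\})$, using $\tfrac{\mathrm{d}}{\mathrm{d}t}\tfrac12 B_{2}(\{t\})=\psi(t)$ and the continuity of $t\mapsto B_{2}(\{t\})$ at the integers, so that $B_{2}(\{1\})=B_{2}(0)=\tfrac16$. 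The purpose of this second step is that afterwards the remaining \emph{tail} integral over $[y,\infty)$ is genuinely of size $x^{(\beta-1)/a}$ (resp.\ $x^{-2/a}$), whereas the trivial bound $|\psi|\leqslant\tfrac12$ alone would only give $x^{\beta/a}$ (resp.\ $x^{-1/a}$), which is too weak.

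\noindent
For the harmonic sum, inserting $f(t)=1/t$ gives $\int_{1}^{y}\mathrm{d}t/t=\tfrac1a\log x$, the boundary term $-\psi(y)f(y)=-\psi(x^{1/a})x^{-1/a}$, and $\int_{1}^{y}\psi(t)f'(t)\,\mathrm{d}t=-\int_{1}^{\infty}\psi(t)t^{-2}\,\mathrm{d}t+\int_{y}^{\infty}\psi(t)t^{-2}\,\mathrm{d}t$. The first integral is a constant, and the classical identity $\int_{1}^{\infty}\psi(t)t^{-2}\,\mathrm{d}t=\tfrac12-\gamma$ (equivalently $\gamma=1-\int_{1}^{\infty}\{t\}t^{-2}\,\mathrm{d}t$) makes $\tfrac12-\int_{1}^{\infty}\psi(t)t^{-2}\,\mathrm{d}t$ collapse to $\gamma$; the second integral is $O(y^{-2})=O(x^{-2/a})$ after the integration by parts against $B_{2}$ (boundary contribution $\ll y^{-2}$, remaining tail $\ll\int_{y}^{\infty}t^{-3}\,\mathrm{d}t\ll y^{-2}$). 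This yields the first formula.

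\noindent
For $\sum_{d\leqslant y}d^{\beta}$ I would take $f(t)=t^{\beta}$, noting that $\beta=0$ is just $\sum_{d\leqslant y}1=\lfloor y\rfloor=y-\psi(y)-\tfrac12$ (which agrees with the claimed formula in the limit) and that $\beta>-1$ is exactly what makes the integrals below meaningful. Here $\int_{1}^{y}t^{\beta}\,\mathrm{d}t=\tfrac{1}{\beta+1}(y^{\beta+1}-1)$ supplies the main term $\tfrac{1}{\beta+1}x^{(\beta+1)/a}$ and a constant $-\tfrac{1}{\beta+1}$; the boundary term $-\psi(y)y^{\beta}$ is $-\psi(x^{1/a})x^{\beta/a}$; and $\int_{1}^{y}\psi(t)f'(t)\,\mathrm{d}t=\beta\int_{1}^{y}\psi(t)t^{\beta-1}\,\mathrm{d}t$, which the integration by parts against $B_{2}$ turns into $\tfrac{\beta}{2}B_{2}(\{y\})y^{\beta-1}-\tfrac{\beta}{12}-\tfrac{\beta(\beta-1)}{2}\int_{1}^{y}B_{2}(\{t\})t^{\beta-2}\,\mathrm{d}t$. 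When $\beta\geqslant 1$ the last integral is $O_{\beta}(y^{\beta-1})$ directly, so no precise constant is needed at all in this range: the quantity $\tfrac12-\tfrac{\beta}{8}-\tfrac1{\beta+1}$ is itself $O_{\beta}(1)=O_{\beta}(x^{(\beta-1)/a})$ and is absorbed by the error term, which matches the observation after Theorem~\ref{th:main} that the $x^{1}$-term disappears there once $\alpha$ is large enough. When $-1<\beta<1$, instead, $B_{2}(\{t\})t^{\beta-2}$ is absolutely integrable on $[1,\infty)$, so $\int_{1}^{y}=\int_{1}^{\infty}+O_{\beta}(y^{\beta-1})$ and one is left only to assemble the constant.

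\noindent
The step I expect to be the genuine obstacle is exactly this assembly of the constant when $-1<\beta<1$: one must combine $\tfrac12$ (the boundary at $t=1$), $-\tfrac1{\beta+1}$ (from $\int_{1}^{y}t^{\beta}\,\mathrm{d}t$), $-\tfrac{\beta}{12}$ (from $B_{2}(0)=\tfrac16$) and $-\tfrac{\beta(\beta-1)}{2}\int_{1}^{\infty}B_{2}(\{t\})t^{\beta-2}\,\mathrm{d}t$, the last of which I would evaluate through the standard integral representation of the analytically continued zeta function, namely $\zeta(s)=\tfrac{1}{s-1}+\tfrac12-s\int_{1}^{\infty}\psi(t)t^{-s-1}\,\mathrm{d}t$ together with its version obtained after one integration by parts, taken at $s=-\beta$; verifying that the total reduces to the stated $\tfrac12-\tfrac{\beta}{8}-\tfrac1{\beta+1}$ is the only computation demanding real care, the rest being routine once the scheme above is in place. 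A useful structural check along the way is that the pole $-\tfrac1{\beta+1}$ appearing in that constant is precisely what is required for $\tfrac{1}{\beta+1}x^{(\beta+1)/a}$ together with it to reproduce the $\tfrac1a\log x$ of the harmonic case in the limit $\beta\to-1^{+}$.
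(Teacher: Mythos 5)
Your scheme (first-order Euler--Maclaurin on $[1,y]$, $y=x^{1/a}$, followed by one integration by parts against $\tfrac12 B_2(\{t\})$) is exactly the route the paper intends -- it gives no proof, only a pointer to Euler--Maclaurin -- and your treatment of the harmonic case is complete and correct, including the identification $\tfrac12-\int_1^\infty\psi(t)t^{-2}\,\mathrm{d}t=\gamma$ and the $O(y^{-2})$ tail. For the power sum, your setup is also correct, and so is the remark that for $\beta\geqslant 1$ the constant is swallowed by the error term (at $\beta=1$ the factor $\beta(\beta-1)/2$ even kills the tail integral outright).

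The genuine problem sits precisely in the step you deferred. Carried out via the representation you quote, your own computation gives the constant \emph{exactly}: it equals $\tfrac12-\tfrac1{\beta+1}-\tfrac{\beta}{12}-\tfrac{\beta(\beta-1)}{2}\int_1^\infty B_2(\{t\})\,t^{\beta-2}\,\mathrm{d}t=\zeta(-\beta)$, and this does \emph{not} reduce to the stated $\tfrac12-\tfrac{\beta}{8}-\tfrac1{\beta+1}$ on $(-1,1)\setminus\{0\}$, which is the only range where the constant matters (there the error $x^{(\beta-1)/a}$ is $o(1)$). For instance $\zeta(-\tfrac12)\approx-0.2079$ while $\tfrac12-\tfrac1{16}-\tfrac23\approx-0.2292$, and $\zeta(\tfrac12)\approx-1.4604$ while $\tfrac12+\tfrac1{16}-2=-1.4375$; the two expressions agree only at $\beta=0$ (both $=-\tfrac12$), and for $\beta\geqslant1$ the $O(1)$ discrepancy is harmless. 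In fact the structural check you propose, pushed one step further, detects this: as $\beta\to-1^+$ the finite part of $\zeta(-\beta)+\tfrac1{\beta+1}$ is $\gamma$, matching the harmonic formula, whereas the stated constant leaves $\tfrac58\neq\gamma$. So you should not expect the "assembly of the constant" to confirm the displayed value; the correct conclusion of your argument is the lemma with constant $\zeta(-\beta)$ (equivalently, the displayed constant is admissible only for $\beta=0$ or $\beta\geqslant1$, which is what the paper actually uses when $\alpha$ is an integer, e.g.\ in the corollaries). With that amendment your proof closes; as written, the final verification you flagged as the only delicate point would fail rather than succeed.
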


\subsubsection{Proof of Theorem~\ref{th:main}}

\noindent
Interverting the summations, we derive
\begin{align*}
   \sum_{n \leqslant x} \widetilde{\sigma}_\alpha (n) &= \sum_{n \leqslant x} \ \sum_{\substack{d \mid n \\ d \leqslant \sqrt{n}}} d^\alpha \\
   &= \sum_{d \leqslant \sqrt{x}} d^\alpha \sum_{d \leqslant k \leqslant x/d} 1 \\
   &= \sum_{d \leqslant \sqrt{x}} d^\alpha \left( \left \lfloor\frac{x}{d} \right \rfloor - d + 1 \right)  = \sum_{d \leqslant \sqrt{x}} d^\alpha \left( \frac{x}{d} - d + \frac{1}{2}  - \psi \left( \frac{x}{d} \right) \right) \\
   &= x G_{\alpha-1,0}(x) - G_{\alpha+1,0}(x) + \tfrac{1}{2} G_{\alpha,0}(x) - G_{\alpha,1}(x)
\end{align*}
where $G_{\alpha,j}(x)$ is defined in \eqref{eq:G}. The proof follows from Lemma~\ref{le:Euler_G} with $a=2$, Corollary~\ref{cor:bouw} and Conjecture~\ref{conj:cw}. Note that the identity $\widetilde{\tau} (n) = \frac{1}{2} \left( \tau(n) + \mathbf{1}_\square (n) \right)$, where $\mathbf{1}_\square (n) = 1$ if $n$ is a perfect square and $0$ otherwise, yields an alternative proof of the first estimate of Theorem~\ref{th:main}.
\qed

\subsection{A second restricted divisor sum}

\subsubsection{Introduction}

\noindent
In this section, let $a > 1$ and $\alpha \geqslant 0$ be fixed real numbers. For any $n \in \Z_{\geqslant 1}$, define
$$\sigma_{a,\alpha} (n) = \sum_{\substack{d \mid n \\ d \leqslant n^{1/a}}} d^\alpha$$
and set $\tau_{a} (n) := \sigma_{a,0} (n)$. As above, the study of the mean value
$$\sum_{n \leqslant x} \sigma_{a,\alpha} (n)$$
involve sums, defined in \eqref{eq:G2} below, generalizing the Chowla-Walum sums given in \eqref{eq:G}.

\subsubsection{A slight extension of the $\cw$conjecture}

\noindent
Let $a > 1$, $\alpha \geqslant 0$ and $j \in \Z_{\geqslant 1}$ and set
\begin{equation}
   G_{a,\alpha,j}(x) := \sum_{d \leqslant x^{1/a}} d^\alpha B_j \left( \left\lbrace \frac{x}{d} \right\rbrace \right). \label{eq:G2}
\end{equation}
As in \eqref{eq:G}, we allow $\alpha$ to be any real number without restriction in the case $j=0$. 

\medskip

\noindent
Extending Conjecture~\ref{conj:cw}, we surmise the following estimate.

\begin{conj}
\label{conj:cw2}
For $j \in \Z_{\geqslant 1}$,  $a > 1$, $\alpha \geqslant 0$, $\varepsilon > 0$ and $x$ large
$$G_{a,\alpha,j} (x) \ll_\varepsilon x^{\frac{1}{a} \alpha + \frac{1}{2a} +\varepsilon}.$$
\end{conj}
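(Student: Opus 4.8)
The statement is a conjecture, so I will describe the natural line of attack toward the exact exponent $\tfrac{1}{a}\alpha + \tfrac{1}{2a}$ and indicate precisely where it stalls. The sum \eqref{eq:G2} has $\asymp x^{1/a}$ terms, each of modulus $\ll d^\alpha \ll x^{\alpha/a}$ since $B_j(\{\cdot\})$ is bounded, so the trivial bound is $x^{(\alpha+1)/a}$. The conjectured exponent saves a factor $x^{1/(2a)} = \left( x^{1/a} \right)^{1/2}$; that is, it asserts exactly square-root cancellation in the $d$-sum. I would therefore split $d \leqslant x^{1/a}$ dyadically into $O(\log x)$ blocks $D < d \leqslant 2D$ with $D \leqslant x^{1/a}$ and seek, on each block, a bound $\ll x^{\alpha/a} D^{1/2+\varepsilon}$ (note $D^{1/2} \leqslant x^{1/(2a)}$).

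For $j \geqslant 2$ the Bernoulli function admits the absolutely convergent Fourier expansion
$$B_j \left( \left\lbrace \frac{x}{d} \right\rbrace \right) = - \frac{j!}{(2 \pi i)^j} \sum_{m \neq 0} \frac{1}{m^j} \, e^{2 \pi i m x / d},$$
so, after exchanging summations, matters reduce to the exponential sums $\sum_{D < d \leqslant 2D} d^\alpha \, e^{2 \pi i m x / d}$. The phase $m x / d$ has second derivative of order $m x / D^3$, so van der Corput's second-derivative test --- or, more flexibly, the theory of exponent pairs --- produces cancellation. A computation of the resulting exponents shows that the conjectured bound can be extracted for $a$ in a restricted range of values, the endpoint $a = 2$ being exactly the case $\alpha \geqslant \tfrac12$, $j \geqslant 2$ settled by Kanemitsu and Sita Rama Chandra Rao \cite{kan85}; the $m$-series converges geometrically since $j \geqslant 2$. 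Obtaining the exponent uniformly for \emph{all} $a > 1$, however, requires balancing the two terms of the van der Corput estimate over the full range $D \leqslant x^{1/a}$, and for $a$ far from $2$ neither the second- nor the higher-derivative tests suffice.

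The case $j = 1$ is the genuine heart of the matter. Here the Fourier series is only conditionally convergent, so I would instead use partial summation --- exactly as in the proof of Corollary~\ref{cor:bouw} --- to reduce to the sums $\sum_{D < d \leqslant 2D} \psi(x/d)$ with $3 \leqslant D \leqslant x^{1/a}$, the target becoming $\ll_\varepsilon x^{1/(2a)+\varepsilon}$ uniformly in $D$. But even for $a = 2$ and $D$ of order $x^{1/2}$ this sum governs the Dirichlet divisor problem: the required exponent $\tfrac14 = \tfrac{1}{2a}$ is precisely the Dirichlet divisor conjecture, which is open, the best unconditional bound being the Bourgain--Watt exponent $\frac{517}{1648} \approx 0.314 > \tfrac14$ of Lemma~\ref{le:bouw}. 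For general $a$ the range of $D$ is shorter and the phase $x/d$ sits in a different curvature regime, but the demand for square-root cancellation is no less severe.

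The main obstacle is thus the exponent sum $\sum_{D < d \leqslant 2D} \psi(x/d) \ll D^{1/2+\varepsilon}$ (and its $j \geqslant 2$ analogues) uniformly over $D \leqslant x^{1/a}$ and over $a > 1$. This genuine square-root cancellation lies beyond present exponent-sum technology: the available estimates --- exponent pairs for $j \geqslant 2$, the Bourgain--Watt bound for $j = 1$ --- reach the conjectured exponent only on restricted ranges of $a$, and the single case $j = 1$, $a = 2$ already encodes a famous open problem. This is why the assertion is stated as a conjecture; a full proof would demand new cancellation estimates for these short divisor-phase sums, uniform in $a$, and in their absence only special cases (such as $j \geqslant 2$, $\alpha \geqslant \tfrac12$ for suitable $a$) appear within reach --- presumably the ``small part'' established in the sequel.
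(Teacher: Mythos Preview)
Your assessment is correct: the statement is a conjecture and the paper does not prove it either, explicitly stating ``We have not been able to prove this bound''. The line of attack you sketch --- dyadic decomposition, the absolutely convergent Fourier expansion of $B_j(\{\cdot\})$ for $j \geqslant 2$, partial summation to $\psi$-sums for $j=1$, and exponent-pair estimates --- is exactly the method the paper employs in Theorem~\ref{th:CW2} to obtain its partial bounds; there the specific exponent pairs chosen are Bourgain's $\left( \tfrac{76}{207}+\varepsilon, \tfrac{110}{207}+\varepsilon \right)$ for $j=1$ and $\left( \tfrac{55}{194}+\varepsilon, \tfrac{55}{97}+\varepsilon \right)$ for $j \geqslant 2$, and the paper records that this settles the conjecture for $j \geqslant 2$ precisely in the range $\tfrac{3}{2} \leqslant a \leqslant \tfrac{97}{55}$, confirming your expectation of a ``restricted range of $a$''.
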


\noindent
We have not been able to prove this bound, but, as a step towards this conjecture, we will show the following estimates.

\begin{theorem}
\label{th:CW2}
Let $a > 1$, $\alpha \geqslant 0$ and $j \in \Z_{\geqslant 1}$. For all $\varepsilon > 0$ and $x$ sufficiently large
$$G_{a,\alpha,1} (x) \ll_\varepsilon x^{\frac{1}{a} \alpha + \frac{34}{283a} + \frac{76}{283} +\varepsilon} + x^{\frac{1}{a} \alpha + \frac{2}{a}-1} \log x$$
and, if $j \geqslant 2$
$$G_{a,\alpha,j} (x) \ll_\varepsilon x^{\frac{1}{a} \alpha + \frac{55}{194} +\varepsilon} + x^{\frac{1}{a} \alpha + \frac{2}{a}-1} \log x.$$
\end{theorem}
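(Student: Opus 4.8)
The plan is to split the range $d \leqslant x^{1/a}$ dyadically and reduce each piece to an exponential-sum estimate for $\psi$ (when $j=1$) or to the Chowla--Walum-type bounds that Kanemitsu and Sita Rama Chandra Rao proved for $j \geqslant 2$. First I would write $G_{a,\alpha,j}(x) = \sum_{D} \sum_{D < d \leqslant 2D} d^\alpha B_j(\{x/d\})$ where $D$ runs over powers of $2$ with $D \leqslant x^{1/a}$, losing only a factor $\log x$, and pull out $d^\alpha \ll D^\alpha$ by partial summation so that it suffices to bound $\sum_{D < d \leqslant 2D} B_j(\{x/d\})$ for each dyadic $D$. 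The point is that on such a block the argument $x/d$ ranges over an interval of length $\asymp x/D$, and the relevant parameter for the van der Corput / Bombieri--Iwaniec machinery is the size of $x/D^2$; when $D$ is close to $x^{1/a}$ this is $x^{1-2/a}$, which is small, and this is precisely where the secondary term $x^{\alpha/a + 2/a - 1}\log x$ comes from.

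For the case $j = 1$, I would estimate each block sum $\sum_{D < d \leqslant 2D}\psi(x/d)$ by the exponent-pair / one-dimensional exponential-sum bounds underlying the $\tfrac{34}{283a}+\tfrac{76}{283}$ exponent — this is the analogue of the Bourgain--Watt input (Lemma \ref{le:bouw}) but now adapted to the shorter range $d \leqslant x^{1/a}$ rather than $d \leqslant x^{1/2}$; when the block is too short for the exponential-sum bound to beat the trivial bound $\ll D$, one instead uses $|\psi| \leqslant \tfrac12$ trivially, and the crossover between these two regimes produces exactly the two terms in the stated bound. For $j \geqslant 2$ the Bernoulli function $B_j(\{x/d\})$ has a Fourier expansion with rapidly decaying coefficients, so after truncating the Fourier series one is left with sums $\sum_{D<d\leqslant 2D} e(hx/d)$ that are handled by the classical one-dimensional second-derivative test (van der Corput), which is what yields the cleaner exponent $\tfrac{55}{194}$ independent of the precise value of $\alpha$; alternatively one can directly invoke the Kanemitsu--Sita Rama Chandra Rao result $G_{\alpha,j}(x) \ll x^{\alpha/2 + 1/4 + \varepsilon}$ for $\alpha \geqslant \tfrac12$, $j \geqslant 2$ and rescale it to the truncated range, then handle the remaining short range trivially.

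Finally I would reassemble the dyadic pieces: summing $D^\alpha \cdot (\text{block bound})$ over $O(\log x)$ values of $D$ with $D \leqslant x^{1/a}$, the block bound that is a positive power of $x$ (independent of $D$) dominates at $D \asymp x^{1/a}$, giving the first term $x^{\alpha/a + \cdots + \varepsilon}$, while the trivial short-block contribution sums to $\sum_{D}D^{\alpha+1} \ll x^{(\alpha+1)/a}$-type terms that must be checked against $x^{\alpha/a + 2/a - 1}\log x$; the bookkeeping here is routine once the per-block bounds are in place. The main obstacle I expect is the $j=1$ case: unlike the $j \geqslant 2$ situation, there is no absolutely convergent Fourier expansion, so one genuinely needs a non-trivial exponential-sum estimate valid uniformly over all dyadic blocks $D \leqslant x^{1/a}$ — establishing (or correctly quoting) the bound with exponent $\tfrac{34}{283a} + \tfrac{76}{283}$ and verifying that its shape is uniform in $a$ and in $D$ is the delicate step, whereas everything else is dyadic decomposition and partial summation.
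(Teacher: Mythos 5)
Your skeleton --- dyadic decomposition, pulling out $d^\alpha$ by partial summation, treating $j=1$ through $\psi$-sums and $j\geqslant 2$ through the Fourier expansion of $B_j$ --- is exactly the paper's, but the step that actually produces the stated exponents is either deferred or based on a tool that cannot deliver them. For $j=1$ you explicitly postpone ``establishing or correctly quoting'' the per-block estimate, and that is precisely where the theorem lives: the paper bounds $\sum_{N<d\leqslant N_1}\psi(x/d)$ by the exponent-pair estimate (via \cite[Corollary~6.35]{bor}, not Bourgain--Watt, which is only used for the $a=2$ situation of the first part), with the specific pair $(k,\ell)=BA^2\left(\tfrac{13}{84}+\varepsilon,\tfrac{55}{84}+\varepsilon\right)=\left(\tfrac{76}{207}+\varepsilon,\tfrac{110}{207}+\varepsilon\right)$ coming from Bourgain's decoupling exponent pair; the block bound $x^{k/(k+1)}N^{\alpha+(\ell-k)/(k+1)}+N^{2+\alpha}x^{-1}$ evaluated at $N=x^{1/a}$ is what yields $\tfrac{34}{283a}+\tfrac{76}{283}$. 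Note also that the secondary term $x^{\alpha/a+2/a-1}\log x$ is the $N^{2+\alpha}x^{-1}$ piece of that estimate, not a crossover with the trivial bound: estimating a block at $D\asymp x^{1/a}$ trivially costs $x^{(\alpha+1)/a}$, which is \emph{larger} than the stated error term since $a>1$, so the bookkeeping you describe would not recover the theorem.

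For $j\geqslant 2$ the gap is concrete: the classical second-derivative test is the exponent pair $\left(\tfrac12,\tfrac12\right)$, and inserting it into the block bound $x^kN^{\alpha+\ell-2k}+N^{2+\alpha}x^{-1}$ at $N=x^{1/a}$ gives the exponent $\tfrac{\alpha}{a}+\tfrac{a-1}{2a}$, which exceeds $\tfrac{\alpha}{a}+\tfrac{55}{194}$ as soon as $a>\tfrac{97}{42}$; so van der Corput alone is strictly weaker than the claim for large $a$. The $a$-independent exponent $\tfrac{55}{194}$ is special: it requires a pair with $\ell=2k$, and the paper uses $(k,\ell)=BA\left(\tfrac{13}{84}+\varepsilon,\tfrac{55}{84}+\varepsilon\right)=\left(\tfrac{55}{194}+\varepsilon,\tfrac{55}{97}+\varepsilon\right)$, for which $\tfrac{k(a-2)+\ell}{a}=k=\tfrac{55}{194}$ for every $a$. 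Your fallback of rescaling the Kanemitsu--Sita Rama Chandra Rao bound also fails: their estimate concerns the complete sum over $d\leqslant\sqrt{x}$, and since the summands oscillate one cannot pass to the subrange $d\leqslant x^{1/a}$; handling the complementary range trivially costs $x^{(\alpha+1)/2}$, far above the target. (The paper additionally checks the mild conditions $\alpha(k+1)+\ell-k\geqslant 0$ and $\alpha+\ell-2k\geqslant 0$ so that the dyadic maximum sits at $N=x^{1/a}$; with the pairs above these are automatic, but your reassembly step should state them.)
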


\begin{rem}
This result shows that, in the case $j \geqslant 2$, Conjecture~\ref{conj:cw2} is settled provided that $\frac{3}{2} \leqslant a \leqslant \frac{97}{55}$.
\end{rem}

\begin{proof}
For $N \in \Z_{\geqslant 1}$ large, $\alpha \geqslant 0$ and $j \in \Z_{\geqslant 1}$, define
$$\mathcal{G}_{N,\alpha,j} := \sum_{N < d \leqslant 2N} d^\alpha B_j \left( \left\lbrace \frac{x}{d} \right\rbrace \right).$$
The usual splitting argument yields
$$G_{a,\alpha,j}(x) \ll \underset{N \leqslant x^{1/a}}{\mx} \left( \mathcal{G}_{N,\alpha,j} \right) \log x$$ 
so that it suffices to bound $\mathcal{G}_{N,\alpha,j}$. Assume first $j=1$. Then by partial summation and \cite[Corollary~6.35]{bor}, we get if $(k, \ell)$ is an exponent pair
$$\mathcal{G}_{N,\alpha,1} \ll N^\alpha \underset{N \leqslant N_1 \leqslant 2N}{\mx} \left| \sum_{N < d \leqslant N_1} \psi \left( \frac{x}{d} \right) \right| \ll x^{\frac{k}{k+1}} N^{\alpha + \frac{\ell-k}{k+1}} + N^{2+\alpha} x^{-1}.$$
If $j \geqslant 2$, recall that, for any $t \in \R$
$$B_j \left( \{ t \} \right) = - \frac{j!}{(2 \pi i)^j} \sum_{m \neq 0} \frac{e(mt)}{m^j}$$
where, as usual, $e(x) := e^{2 \pi i x}$, so that
\begin{align*}
   \mathcal{G}_{N,\alpha,j} & \ll N^\alpha \sum_{m \geqslant 1} m^{-j} \underset{N \leqslant N_1 \leqslant 2N}{\mx} \left| \sum_{N < d \leqslant N_1} e\left( \frac{mx}{d} \right)  \right| \\
   & \ll N^\alpha \sum_{m \geqslant 1} m^{-j} \left\lbrace \left( \frac{mx}{N} \right)^k N^{\ell - k} + N^2 (mx)^{-1} \right\rbrace \\
   & \ll x^k N^{\alpha + \ell - 2 k} \sum_{m \geqslant 1} m^{k-j} + N^{2+\alpha} x^{-1} \sum_{m \geqslant 1} m^{-j-1} \\
   & \ll x^k N^{\alpha + \ell - 2 k} + N^{2+\alpha} x^{-1}
\end{align*}
since $k - j \leqslant \frac{1}{2} - 2 = - \frac{3}{2}$. Hence, since $\alpha(k+1) + \ell - k \geqslant 0$
$$G_{a,\alpha,1} (x) \ll \left( x^{\frac{1}{a} \alpha + \frac{k(a-1)+\ell}{a(k+1)}} + x^{\frac{\alpha+2}{a}-1} \right) \log x$$
and, if $j \geqslant 2$ and $\alpha + \ell - 2 k \geqslant 0$
$$G_{a,\alpha,j} (x) \ll \left( x^{\frac{1}{a} \alpha + \frac{k(a-2)+\ell}{a}} + x^{\frac{\alpha+2}{a}-1} \right) \log x$$
and the result follows by using Bourgain's exponent pair \cite[Theorem~6]{bou} $(k, \ell) = BA^2\left( \frac{13}{84} + \varepsilon, \frac{55}{84} + \varepsilon \right) = \left( \frac{76}{207} + \varepsilon, \frac{110}{207} + \varepsilon \right)$ if $j=1$, and $(k, \ell) = BA \left( \frac{13}{84} + \varepsilon, \frac{55}{84} + \varepsilon \right) = \left( \frac{55}{194} + \varepsilon, \frac{55}{97} + \varepsilon \right)$ if $j \geqslant 2$. 
\end{proof}

\subsubsection{Application}

\begin{theorem}
\label{th:main2}
Let $\alpha \geqslant 0$ be a fixed real number and $a \in \Z_{\geqslant 3}$. For $x$ sufficiently large
$$\sum_{n \leqslant x} \tau_{a} (n) = \tfrac{1}{a} x \log x + x \left( \gamma - \tfrac{1}{a} \right)  + O \left( x^{1-2/a} \right)$$
and, if $\alpha > 0$
$$\sum_{n \leqslant x} \sigma_{a,\alpha} (n) = \tfrac{a}{\alpha(\alpha+a)} x^{1+\alpha/a} + x \left( \tfrac{5}{8} - \tfrac{\alpha}{8}  - \tfrac{1}{\alpha} \right)  + O \left( x^{1+(\alpha-2)/a} \right).$$
\end{theorem}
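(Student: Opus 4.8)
The plan is to follow the scheme of the proof of Theorem~\ref{th:main} verbatim, with $n^{1/a}$ in place of $\sqrt n$ and Theorem~\ref{th:CW2} in place of Corollary~\ref{cor:bouw}. Interverting the summations and writing $n=dk$, the restriction $d\leqslant n^{1/a}$ becomes $d^{a-1}\leqslant k$; here the hypothesis $a\in\Z_{\geqslant 3}$ is used, since then $d^{a-1}$ is an integer and no rounding is needed, whence
\begin{align*}
  \sum_{n\leqslant x}\sigma_{a,\alpha}(n)
  &= \sum_{d\leqslant x^{1/a}} d^\alpha \sum_{d^{a-1}\leqslant k\leqslant x/d} 1
   = \sum_{d\leqslant x^{1/a}} d^\alpha\left(\left\lfloor\frac{x}{d}\right\rfloor - d^{a-1} + 1\right) \\
  &= \sum_{d\leqslant x^{1/a}} d^\alpha\left(\frac{x}{d} + \frac12 - d^{a-1} - \psi\!\left(\frac{x}{d}\right)\right) \\
  &= x\,G_{a,\alpha-1,0}(x) + \tfrac12 G_{a,\alpha,0}(x) - G_{a,\alpha+a-1,0}(x) - G_{a,\alpha,1}(x),
\end{align*}
where $G_{a,\beta,j}$ is as in \eqref{eq:G2}; for $\alpha=0$ this is the same identity with $G_{a,-1,0}(x)=\sum_{d\leqslant x^{1/a}}1/d$.

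Next I would insert Lemma~\ref{le:Euler_G} (with parameter $1/a$, and $\beta=\alpha-1,\ \alpha,\ \alpha+a-1$, all admissible when $\alpha>0$, resp.\ the first formula of that lemma when $\alpha=0$) into the three sums with $j=0$, and Theorem~\ref{th:CW2} with $j=1$ into $G_{a,\alpha,1}(x)$. The main terms $\tfrac1\alpha x^{1+\alpha/a}$ from $x\,G_{a,\alpha-1,0}(x)$ and $-\tfrac1{\alpha+a}x^{1+\alpha/a}$ from $-G_{a,\alpha+a-1,0}(x)$ combine into $\tfrac{a}{\alpha(\alpha+a)}x^{1+\alpha/a}$, and — this is the one place where the structure of the problem is genuinely exploited — the two terms of size $\psi(x^{1/a})\,x^{1+(\alpha-1)/a}$ coming from the $\psi(x^{1/a})$-contributions of these same two sums are equal and opposite, hence cancel; this cancellation is essential, since $x^{1+(\alpha-1)/a}$ exceeds the claimed error. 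What remains of size $x$ is $x\big(\tfrac12-\tfrac{\alpha-1}{8}-\tfrac1\alpha\big)=x\big(\tfrac58-\tfrac\alpha8-\tfrac1\alpha\big)$, produced by the constant term of Lemma~\ref{le:Euler_G} applied to $G_{a,\alpha-1,0}$; in the $\tau_a$ case the same cancellation of the $\psi(x^{1/a})\,x^{1-1/a}$ terms occurs, and the $\tfrac1a x\log x$ and $(\gamma-\tfrac1a)x$ terms come from the first formula of Lemma~\ref{le:Euler_G}.

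It then remains to verify that every other term is $O(x^{1+(\alpha-2)/a})$, i.e.\ has exponent at most $(a+\alpha-2)/a$. The secondary powers $x^{(\alpha+1)/a}$ (from $\tfrac12 G_{a,\alpha,0}$) and $x^{1/a}$ (its $\tau_a$-analogue), the term $\psi(x^{1/a})\,x^{\alpha/a}$, the bounded contributions, and the errors $O(x^{(\alpha-1)/a})$ and $O(x^{1+(\alpha-2)/a})$ of Lemma~\ref{le:Euler_G} all have exponent $\leqslant(a+\alpha-2)/a$ precisely because $a\geqslant 3$ (e.g.\ $x^{(\alpha+1)/a}\ll x^{1+(\alpha-2)/a}$ is exactly $\alpha+1\leqslant a+\alpha-2$). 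For $G_{a,\alpha,1}(x)$ one invokes Theorem~\ref{th:CW2}: its first contribution has exponent $\tfrac\alpha a+\tfrac{34}{283a}+\tfrac{76}{283}+\varepsilon$, and
$$\left(1-\frac2a\right)-\left(\frac{34}{283a}+\frac{76}{283}\right) = \frac{1}{283}\left(207-\frac{600}{a}\right) \geqslant \frac{7}{283} > 0 \qquad(a\geqslant 3),$$
so a sufficiently small choice of $\varepsilon$ (depending only on the fixed $a$) absorbs it; its second contribution $x^{(\alpha+2)/a-1}\log x$ has exponent $(\alpha+2-a)/a$, strictly below $(a+\alpha-2)/a$ for $a\geqslant 3$, so the $\log$ is harmless. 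Assembling the pieces yields both formulas. The main obstacle is really just this last bookkeeping: one must see that $a\geqslant 3$ is exactly the condition making every error exponent drop below the claimed one — indeed the displayed gap is already negative for $a=2$, which is why that case needs the sharper, separate treatment of Theorem~\ref{th:main}.
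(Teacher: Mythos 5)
Your proposal is correct and follows essentially the same route as the paper: the identity $\sum_{n\leqslant x}\sigma_{a,\alpha}(n)=x\,G_{a,\alpha-1,0}(x)-G_{a,\alpha+a-1,0}(x)+\tfrac12 G_{a,\alpha,0}(x)-G_{a,\alpha,1}(x)$ (valid since $a\in\Z_{\geqslant 3}$ makes $d^{a-1}$ an integer), then Lemma~\ref{le:Euler_G} for the $j=0$ sums and Theorem~\ref{th:CW2} for $G_{a,\alpha,1}$, with the extra exponent $\tfrac{\alpha}{a}+\tfrac{34}{283a}+\tfrac{76}{283}+\varepsilon$ absorbed because $a\geqslant 3$. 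Your write-up actually supplies details the paper leaves implicit, notably the cancellation of the $\psi(x^{1/a})\,x^{1+(\alpha-1)/a}$ terms and the explicit exponent bookkeeping showing where $a\geqslant 3$ is needed.
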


\begin{proof}
As in the proof of Theorem~\ref{th:main}, we derive
$$\sum_{n \leqslant x} \sigma_{a,\alpha} (n) = x G_{a,\alpha-1,0} (x) - G_{a,\alpha+a-1,0}(x) + \tfrac{1}{2} G_{a,\alpha,0}(x) - G_{a,\alpha,1}(x)$$
since $a \geqslant 3$ is now an integer, and the use of Lemma~\ref{le:Euler_G} and Theorem~\ref{th:CW2} yields the asserted estimates with an extra error term given by
$$x^{\frac{\alpha}{a} + \frac{34}{283a} + \frac{76}{283} + \varepsilon}$$
which is easily seen to be absorbed by the other error term since $a \geqslant 3$. 
\end{proof}

\noindent
With $\alpha = 1$ and $a \in \Z_{\geqslant 3}$, we derive the following formula.

\begin{coro}
Let $a \in \Z_{\geqslant 3}$. For $x$ sufficiently large
$$\sum_{n \leqslant x} \sigma_{a,1} (n) = \tfrac{a}{a+1} x^{1+1/a} - \tfrac{1}{2} x + O \left( x^{1-1/a} \right).$$
\end{coro}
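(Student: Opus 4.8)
The plan is to obtain this corollary as the special case $\alpha = 1$ of Theorem~\ref{th:main2}. Since $1 > 0$, the second asymptotic formula there applies, and I would simply substitute $\alpha = 1$ into its three ingredients. The main term $\tfrac{a}{\alpha(\alpha+a)} x^{1+\alpha/a}$ becomes $\tfrac{a}{1\cdot(1+a)} x^{1+1/a} = \tfrac{a}{a+1} x^{1+1/a}$; the linear term $x\bigl(\tfrac58 - \tfrac\alpha8 - \tfrac1\alpha\bigr)$ becomes $x\bigl(\tfrac58 - \tfrac18 - 1\bigr) = -\tfrac12 x$; and the error term $x^{1+(\alpha-2)/a}$ becomes $x^{1-1/a}$. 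No further estimation is needed, since Theorem~\ref{th:main2} already provides an $\varepsilon$-free error term.

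For completeness I would also indicate the direct route, mirroring the proof of Theorem~\ref{th:main2}. Writing $\sum_{n\le x}\sigma_{a,1}(n) = x\,G_{a,0,0}(x) - G_{a,a,0}(x) + \tfrac12 G_{a,1,0}(x) - G_{a,1,1}(x)$, I would apply Lemma~\ref{le:Euler_G} with $\beta = 0$ and $\beta = a$: the contributions $-x\,\psi(x^{1/a})$ coming from $x\,G_{a,0,0}(x)$ and from $-G_{a,a,0}(x)$ cancel exactly, the leftover polynomial terms combine to $\bigl(1 - \tfrac1{a+1}\bigr)x^{1+1/a} - \tfrac12 x = \tfrac a{a+1}x^{1+1/a} - \tfrac12 x$, and the remaining constants are $O(x^{1-1/a})$ because $a \ge 3$. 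Next, Lemma~\ref{le:Euler_G} with $\beta = 1$ gives $\tfrac12 G_{a,1,0}(x) = \tfrac14 x^{2/a} + O(x^{1/a})$, which is $O(x^{1-1/a})$ once $a \ge 3$. Finally, Theorem~\ref{th:CW2} with $\alpha = 1$ yields $G_{a,1,1}(x) \ll_\varepsilon x^{\frac{317}{283a} + \frac{76}{283} + \varepsilon} + x^{3/a - 1}\log x$, and one checks that $\tfrac{317}{283a} + \tfrac{76}{283} \le 1 - \tfrac1a$ for every integer $a \ge 3$ (the inequality rearranges to $a \ge \tfrac{600}{207} \approx 2.9$), so that choosing $\varepsilon$ small enough absorbs this term into $O(x^{1-1/a})$ as well.

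There is no genuine obstacle here: the statement is a routine specialization of Theorem~\ref{th:main2}, and the only point requiring a little care — in either approach — is the bookkeeping of which lower-order contributions are swallowed by $O(x^{1-1/a})$, together with, in the direct route, the exact cancellation of the two $\psi(x^{1/a})$-terms.
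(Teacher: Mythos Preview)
Your proposal is correct and matches the paper's approach exactly: the paper simply states that the corollary follows from Theorem~\ref{th:main2} with $\alpha = 1$ and $a \in \Z_{\geqslant 3}$, which is precisely your first paragraph. Your additional direct computation is a welcome (and accurate) elaboration that the paper omits.
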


\end{document}